\def\imod#1{\allowbreak\mkern10mu({\operator@font mod}\,\,#1)}
\newtheorem{theorem}{Theorem}[section]
\newtheorem{lemma}{Lemma}[section]
\newtheorem{conjecture}{Conjecture}[section]
\theoremstyle{definition}
\newtheorem{remark}{Remark}[section]
\begin{document}

\title{Ranges of Unitary Divisor Functions}
\author{Colin Defant}
\thanks{This work was supported by National Science Foundation grant no. 1262930.}
\address{University of Florida}
\address{Current Address: Princeton University, Department of Mathematics}
\email{cdefant@ufl.edu, cdefant@princeton.edu}

\maketitle 

\begin{abstract}
For any real $t$, the unitary divisor function $\sigma_t^*$ is the multiplicative arithmetic function defined by $\sigma_t^*(p^{\alpha})=1+p^{\alpha t}$ for all primes $p$ and positive integers $\alpha$. Let $\overline{\sigma_t^*(\mathbb N)}$ denote the topological closure of the range of $\sigma_t^*$. We calculate an explicit constant $\eta^*\approx 1.9742550$ and show that $\overline{\sigma_{-r}^*(\mathbb N)}$ is connected if and only if $r\in(0,\eta^*]$. We end with some open problems.
\end{abstract}
\bigskip

\noindent \emph{Keywords: } Dense, divisor function, unitary divisor, connected 

\noindent 2010 {\it Mathematics Subject Classification}:  Primary 11B05; Secondary 11A25.
  
\section{Introduction} 
For each $c\in\mathbb C$, the divisor function $\sigma_c$ is defined by $\sigma_c(n)=\sum_{d\mid n}d^c$. Divisor functions, especially $\sigma_1,\sigma_0$, and $\sigma_{-1}$, are among the most extensively-studied arithmetic functions \cite{Apostol, Hardy, Mitrinovic}. For example, two very classical number-theoretic topics are the study of perfect numbers and the study of friendly numbers. A positive integer $n$ is said to be \emph{perfect} if $\sigma_{-1}(n)=2$, and $n$ is said to be \emph{friendly} if there exists $m\neq n$ with $\sigma_{-1}(m)=\sigma_{-1}(n)$ \cite{Pollack}. Motivated by the very difficult problems related to perfect and friendly numbers, Laatsch \cite{Laatsch86} studied $\sigma_{-1}(\mathbb N)$, the range of $\sigma_{-1}$. He showed that $\sigma_{-1}(\mathbb N)$ is a dense subset of the interval $[1,\infty)$ and asked if $\sigma_{-1}(\mathbb N)$ is in fact equal to the set $\mathbb Q\cap[1,\infty)$. Weiner \cite{Weiner} answered this question in the negative, showing that $(\mathbb Q\cap[1,\infty))\setminus\sigma_{-1}(\mathbb N)$ is also dense in $[1,\infty)$.  

The author has studied ranges of divisor functions in a variety of contexts \cite{Defant,Defant5,Defant2,Defant3,Defant4}. For example, it is shown in \cite{Defant} that $\mathcal N(c)\to\infty$ as $\Re(c)\to-\infty$, where $\mathcal N(c)$ denotes the number of connected components of $\overline{\sigma_c(\mathbb N)}$. Here, the overline denotes the topological closure. In \cite{Sanna}, Sanna develops an algorithm that can be used to calculate $\overline{\sigma_{-r}(\mathbb N)}$ when $r>1$ is real and is known with sufficient precision. In addition, he proves that $\mathcal N(-r)$ is finite for such $r$. The author \cite{Defant5} has since extended this result, showing that $\mathcal N(c)$ is finite whenever $\Re(c)\leq 0$ and $c\neq 0$. Very recently, Zubrilina \cite{Nina} has obtained asymptotic estimates for $\mathcal N(-r)$ when $r>1$. She has also shown that there is no real number $r$ such that $\mathcal N(r)=4$. 

In this paper, we study the close relatives of the divisor functions known as unitary divisor functions. A \emph{unitary divisor} of an integer $n$ is a divisor $d$ of $n$ such that $\gcd(d,n/d)=1$. The unitary divisor function $\sigma_c^*$ is defined by  \cite{Alladi, Cohen, Guy} \[\sigma_c^*(n)=\sum_{\substack{d\mid n \\ \gcd(d,n/d)=1}}d^c.\] The function $\sigma_c^*$ is multiplicative and satisfies $\sigma_c^*(p^{\alpha})=1+p^{\alpha c}$ for all primes $p$ and positive integers $\alpha$. 

If $t\in[-1,0)$, then one may use the same argument that Laatsch employed in \cite{Laatsch86} in order to show that $\overline{\sigma_{t}^*(\mathbb N)}=[1,\infty)$. In particular, $\overline{\sigma_{t}^*(\mathbb N)}$ is connected if $t\in [-1,0)$. On the other hand, $\overline{\sigma_{t}^*(\mathbb N)}$ is a discrete disconnected set if $t\geq 0$ (indeed, in this case, $\sigma_t(\mathbb N)\cap[0,s]$ is finite for every $s>0$). The purpose of this paper is to prove the following theorem. Let $\zeta$ denote the Riemann zeta function. 

\begin{theorem} \label{Thm2.3} 
Let $\eta^*$ be the unique number in the interval $(1,2]$ that satisfies the equation 
\begin{equation}\label{Eq1}
\frac{2^{\eta^*}+1}{2^{\eta^*}}\cdot\frac{(3^{\eta^*}+1)^2}{3^{2\eta^*}+1}=\frac{\zeta(\eta^*)}{\zeta(2\eta^*)}.
\end{equation} 
If $r\in\mathbb R$, then $\overline{\sigma_{-r}(\mathbb N)}$ is connected if and only if $r\in(0,\eta^*]$. 
\end{theorem}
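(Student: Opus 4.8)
The plan is to exploit multiplicativity to describe the range concretely and then reduce connectedness to the statement that the closure is a genuine interval of $\mathbb R$. Since $\sigma_{-r}^*$ is multiplicative with $\sigma_{-r}^*(p^\alpha)=1+p^{-\alpha r}$, every value is a finite product $\prod_p c_p$ in which each local factor lies in $A_p:=\{1\}\cup\{1+p^{-\alpha r}:\alpha\ge 1\}$ and all but finitely many $c_p$ equal $1$. I would first verify that $\overline{\sigma_{-r}^*(\mathbb N)}$ is exactly the set of convergent products $\prod_p c_p$ with $c_p\in A_p$; being a subset of $\mathbb R$, it is connected precisely when it is an interval. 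Writing $R_{\ge q}=\prod_{p\ge q}(1+p^{-r})$ and $R_{>q}=\prod_{p>q}(1+p^{-r})$, so that $R_{\ge 2}=\zeta(r)/\zeta(2r)=:M$ (with $M=+\infty$ when $r\le 1$), the target interval is $[1,M]$. The case $r\in(0,1]$ is already covered by the Laatsch-style argument mentioned in the introduction, so the substance lies in $r\in(1,\eta^*]$.

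For the connectedness direction I would isolate, for each prime $q$, the gap condition
\[
(C_q):\qquad \prod_{p>q}(1+p^{-r})\ \ge\ \frac{1+q^{-r}}{1+q^{-2r}},
\]
and prove the reduction: \emph{if $(C_q)$ holds for every prime $q$, then $\overline{\sigma_{-r}^*(\mathbb N)}=[1,M]$.} The mechanism is a peeling induction on primes. If the primes exceeding $q$ already contribute the full interval $[1,R_{>q}]$, then adjoining the rungs of prime $q$ yields $\bigcup_{\alpha\ge 1}(1+q^{-\alpha r})\,[1,R_{>q}]\,\cup\,[1,R_{>q}]$, i.e.\ the stacked intervals $[\,1+q^{-\alpha r},\,(1+q^{-\alpha r})R_{>q}\,]$ together with the base. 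Consecutive intervals overlap iff $R_{>q}\ge\frac{1+q^{-(\alpha-1)r}}{1+q^{-\alpha r}}$, whose worst case $\alpha=2$ is exactly $(C_q)$; the higher rungs descend to $1$ and merge with the base, so the union collapses to $[1,(1+q^{-r})R_{>q}]=[1,R_{\ge q}]$. To launch the induction past the infinite tail, note that for all sufficiently large $p$ the stronger inequality $1+p^{-r}\le\prod_{p'>p}(1+p'^{-r})$ holds, so by the classical subset-sum criterion (a decreasing summable sequence whose every term is at most the sum of its successors has all subset sums filling an interval) the top rungs alone give $[1,R_{\ge q_1}]$ for some fixed $q_1$; one then peels the finitely many smaller primes. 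Finally I would check that \eqref{Eq1} is precisely $(C_3)$ at equality: clearing denominators turns it into $\frac{1+3^{-r}}{1+3^{-2r}}=\prod_{p\ge 5}(1+p^{-r})$, which is $(C_3)$ with $R_{>3}=\prod_{p\ge5}(1+p^{-r})$.

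For the converse I would exhibit an explicit gap when $r>\eta^*$. Put $B_1=(1+2^{-r})(1+3^{-r})$ and $T_2=(1+2^{-r})(1+3^{-2r})\prod_{p\ge 5}(1+p^{-r})$. I claim no product $\prod_p c_p$ lies in the open interval $(T_2,B_1)$: a short case analysis of $c_2$ and $c_3$ shows that exceeding $T_2$ forces $c_2=1+2^{-r}$, after which $c_3=1+3^{-r}$ pushes the value up to at least $B_1$, whereas $c_3\le 1+3^{-2r}$ caps it at $T_2$ (and declining to use $2$ or taking $c_2\le 1+2^{-2r}$ keeps the value below $T_2$). Since $T_2<B_1$ is equivalent to the failure of $(C_3)$, i.e.\ to $r>\eta^*$, this interval is nonempty, and both endpoints lie in the closure, so $\overline{\sigma_{-r}^*(\mathbb N)}$ is disconnected. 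For $r\le 0$ the closure is discrete, as noted in the introduction.

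The main obstacle is establishing that prime $3$ is the \emph{binding} constraint, i.e.\ that $(C_q)$ holds for every prime $q$ on all of $(0,\eta^*]$ with prime $3$ the first to fail as $r$ increases. Setting $F_q(r)=\frac{1+q^{-2r}}{1+q^{-r}}\prod_{p>q}(1+p^{-r})$, so that $(C_q)\Leftrightarrow F_q\ge 1$, one has $F_3(\eta^*)=1$ by definition, but numerically $F_5(\eta^*)$ and $F_7(\eta^*)$ sit only about $0.01$ above $1$, so the comparison $F_3<F_q$ is genuinely delicate and seems to require rigorous tail estimates together with a monotonicity argument to pass from the single point $\eta^*$ to the whole interval. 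For large $q$ one must rule out local failures near prime gaps; here I expect Bertrand's postulate together with $\sum_{p>q}p^{-r}\sim q^{1-r}/((r-1)\log q)\gg q^{-r}$ to force $F_q(\eta^*)>1$ uniformly, reducing the problem to finitely many explicit small-prime verifications. Pinning down these near-equalities, and confirming $F_3(r)<1$ for every $r>\eta^*$ so that the gap persists, is where the bulk of the careful work will go.
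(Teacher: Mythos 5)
Your overall strategy is the same as the paper's. Your condition $(C_q)$ is, after multiplying through by $q^{2r}$, exactly the inequality $\frac{p_m^{2r}+p_m^r}{p_m^{2r}+1}\leq\prod_{i=m+1}^{\infty}\left(1+p_i^{-r}\right)$ of Theorem \ref{Thm2.1}; your gap interval $(T_2,B_1)$ is the paper's gap for $m=2$; and your identification of \eqref{Eq1} with $(C_3)$ at equality matches the paper's definition of $\eta^*$ as the root of $V_2$. Your ``stacked intervals'' argument for the forward direction is a repackaging of the paper's greedy construction, and it works, with the same worst-case-at-$\alpha=2$ observation. One small unstated ingredient in your converse: the claim that $c_2\leq 1+2^{-2r}$ already caps the product at $T_2$ requires $\frac{1+2^{-2r}}{1+2^{-r}}\leq\frac{1+3^{-2r}}{1+3^{-r}}$, which is not completely obvious; the paper isolates this as Lemma \ref{Lem2.1} and proves it by an explicit algebraic manipulation valid for $r>1$.

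The genuine gap is the part you yourself flag as ``where the bulk of the careful work will go'': proving that $(C_q)$ holds for \emph{every} prime $q$ throughout all of $(1,\eta^*]$, and that it fails for some $q$ whenever $r>\eta^*$ (including $r>3$, where the paper switches to the prime $2$ rather than the prime $3$ to exhibit the failure). Asserting that $F_5(\eta^*)$ and $F_7(\eta^*)$ ``sit only about $0.01$ above $1$'' is a pointwise numerical observation, not a proof over an interval of $r$, and your proposed large-$q$ asymptotic $\sum_{p>q}p^{-r}\gg q^{-r}$ needs effective constants to be usable. The paper resolves this in two steps that your sketch does not supply: Lemma \ref{Lem2.2} (an effective Rosser--Schoenfeld bound giving $p_{j+1}/p_j<\sqrt[3]{2}$ outside $j\in\{1,2,3,4,6,9\}$) feeds into Theorem \ref{Thm2.2}, which shows $F(m+1,r)>F(m,r)$ off the exceptional set and hence reduces everything to six values of $m$ for $r\in(1,3]$; then a derivative bound on the auxiliary functions $J_m$ converts finitely many grid evaluations of $V_m$ into the statement $V_m(r)\leq 0$ on all of $(1,\eta^*]$ for those six $m$, with separate explicit arguments for $r\in[2,3]$ and $r>3$. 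Without some version of this reduction-plus-verification, your argument establishes the framework of the theorem but not the location of $\eta^*$ or the fact that the prime $3$ is binding, which is the actual content of the result.
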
 

\begin{remark}
In the process of proving Theorem~\ref{Thm2.3}, we will show that there is indeed a unique solution to the equation \eqref{Eq1} in the interval $(1,2]$. 
\end{remark}

In all that follows, we assume $r>1$ and study $\sigma_{-r}^*(\mathbb N)$. We first observe that $\sigma_{-r}^*(\mathbb N)\subseteq\displaystyle{\left[1,\zeta(r)/\zeta(2r)\right)}$. This is because if $q_1^{\beta_1}\cdots q_v^{\beta_v}$ is the prime factorization of some positive integer, then 
\[\sigma_{-r}^*(q_1^{\beta_1}\cdots q_v^{\beta_v})=\prod_{i=1}^v\sigma_{-r}^*(q_i^{\beta_i})=\prod_{i=1}^v\left(1+q_i^{-\beta_i r}\right)\leq\prod_{i=1}^v\left(1+q_i^{-r}\right)<\prod_{p}\left(1+p^{-r}\right)\] \[=\prod_{p}\left(\frac{1-p^{-2r}}{1-p^{-r}}\right)=\frac{\zeta(r)}{\zeta(2r)}.\]
It is straightforward to show that $1$ and $\zeta(r)$ are elements of $\overline{\sigma_{-r}^*(\mathbb N)}$. Therefore, Theorem~\ref{Thm2.3} tells us that $\overline{\sigma_{-r}^*(\mathbb N)}=\left[1,\zeta(r)/\zeta(2r)\right]$ if and only if $r\in(0,\eta^*]$. 

\section{Proofs} 

In what follows, let $p_i$ denote the $i^\text{th}$ prime number. Let $\nu_p(x)$ denote the exponent of the prime $p$ appearing in the prime factorization
of the integer $x$. 

To start, we need the following technical yet simple lemma. 

\begin{lemma} \label{Lem2.1} 
If $s,m\in\mathbb{N}$ and $s\leq m$, then $\displaystyle{\frac{p_s^{2r}+1}{p_s^{2r}+p_s^r}\leq\frac{p_m^{2r}+1}{p_m^{2r}+p_m^r}}$ for all $r>1$. 
\end{lemma}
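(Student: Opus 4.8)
The plan is to read the inequality as the monotonicity, in the prime, of the single-variable function obtained by substituting $y=p^r$. Writing $g(y)=\dfrac{y^2+1}{y^2+y}$, the claim $\dfrac{p_s^{2r}+1}{p_s^{2r}+p_s^r}\le\dfrac{p_m^{2r}+1}{p_m^{2r}+p_m^r}$ is exactly $g(p_s^r)\le g(p_m^r)$, and since $p_s\le p_m$ gives $p_s^r\le p_m^r$, one is tempted to argue simply that $g$ is increasing. The catch --- and what I expect to be the main obstacle --- is that $g$ is \emph{not} monotone: a quick derivative computation shows $g'(y)$ has the sign of $y^2-2y-1$, so $g$ decreases on $(0,1+\sqrt2)$ and increases on $(1+\sqrt2,\infty)$. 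Since $2^r$ can be less than $1+\sqrt2$ for $r$ only slightly larger than $1$, the point $p_s^r$ may fall on the decreasing branch, so a bare monotonicity argument is unavailable.

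To sidestep the casework around the minimum at $y=1+\sqrt2$, I would instead compute the difference $g(p_m^r)-g(p_s^r)$ head-on and look for a factorization. Clearing the (positive) denominators, the numerator of $g(v)-g(u)$ with $u=p_s^r$ and $v=p_m^r$ collapses to $(v-u)\bigl[(u-1)(v-1)-2\bigr]$, giving
\[
g(p_m^r)-g(p_s^r)=\frac{(p_m^r-p_s^r)\bigl[(p_s^r-1)(p_m^r-1)-2\bigr]}{(p_s^{2r}+p_s^r)(p_m^{2r}+p_m^r)}.
\]
Spotting this clean factorization is the crux of the argument; once it is in hand the inequality is immediate, and there is no longer any need to track which side of $1+\sqrt2$ the quantities $p_s^r$ and $p_m^r$ lie on.

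It then remains to check that the right-hand side is nonnegative. The denominator is positive, and the factor $p_m^r-p_s^r$ is $\ge 0$ because $p_s\le p_m$. For the bracket, if $s=m$ the first factor vanishes and the difference is $0$, consistent with the equality case. If $s<m$, then necessarily $p_s\ge 2$ and $p_m\ge 3$, so for $r>1$ we have $p_s^r-1\ge 2^r-1>1$ and $p_m^r-1\ge 3^r-1>2$, whence $(p_s^r-1)(p_m^r-1)>2$ and the bracket is positive. This completes the proof. I expect essentially all of the difficulty to lie in recognizing that monotonicity of $g$ fails and in finding the factorization above; verifying the two elementary bounds $2^r-1>1$ and $3^r-1>2$ for $r>1$ is routine.
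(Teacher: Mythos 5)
Your proof is correct, and it takes a genuinely different route from the paper's. The paper fixes $r$ and studies $h(x)=\frac{x^{2r}+1}{x^{2r}+x^r}$ by calculus: it computes $h'(x)=\frac{r}{x(x^r+1)^2}\left(x^r-2-\frac{1}{x^r}\right)$, observes that $h$ is increasing for $x\geq 3$, and then disposes of the one remaining case by verifying $h(2)\leq h(3)$ directly via the cross-multiplied inequality $2^{2r}3^r+3^{2r}+3^r<2^r3^{2r}+2^{2r}+2^r$. So the paper also confronts the non-monotonicity you identify (the point $x=2$ may sit on the decreasing branch), but handles it by splitting into a monotone region plus an explicit check of the exceptional pair $(2,3)$. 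Your substitution $y=p^r$ and the factorization
\[
g(v)-g(u)=\frac{(v-u)\bigl[(u-1)(v-1)-2\bigr]}{(u^2+u)(v^2+v)},
\]
which I have verified, subsumes both cases in a single identity: the monotone region corresponds to $(u-1)(v-1)\geq 2$ being automatic for large $u$, and the exceptional pair is absorbed by the observation that $s<m$ forces $u>2$ and $v>3$. What your approach buys is the elimination of calculus and of the case split, at the cost of having to spot the factorization; it also yields the slightly more general statement that $g(u)\leq g(v)$ whenever $u\leq v$ and $(u-1)(v-1)\geq 2$. Your elementary bounds $2^r-1>1$ and $3^r-1>2$ for $r>1$ are exactly where the hypothesis $r>1$ and the identity of the two smallest primes enter, just as they do in the paper's check of $h(2)\leq h(3)$.
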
 
\begin{proof} 
Fix some $r>1$, and write $\displaystyle{h(x)=\frac{x^{2r}+1}{x^{2r}+x^r}}$. Then
\[h'(x)=\frac{r}{x(x^r+1)^2}\left(x^r-2-\frac{1}{x^r}\right).\] We see that $h(x)$ is increasing when $x\geq 3$. Hence, in order to complete the proof, it suffices to show that $h(2)\leq h(3)$. Let $f(s)=2^s3^{2s}+2^{2s}+2^s-(2^{2s}3^s+3^{2s}+3^s)$. 
For $s\geq 1$, we have  
\[f''(s)=18^s\log^2(18)+4^s\log^2(4)+2^s\log^2(2)-12^s\log^2(12)-9^s\log^2(9)-3^s\log^2(3)\] \[>18^s\log^2(18)-12^s\log^2(12)-9^s\log^2(9)>18^s\log^2(18)-2(12^s\log^2(12)).\] It is easy to verify that $18^s\log^2(18)-2(12^s\log^2(12))$ is increasing in $s$ for $s\geq 1$, so we obtain \[f''(s)>18\log^2(18)-2(12\log^2(12))>0.\] A simple calculation shows that $f'(1)>0$, so it follows that $f'(s)>0$ for all $s\geq 1$. Since $f(1)=0$ and $r>1$, we have $f(r)>0$. Equivalently, $2^{2r}3^r+3^{2r}+3^r<2^r3^{2r}+2^{2r}+2^r$. It follows that $(2^{2r}+1)(3^{2r}+3^r)<(2^{2r}+2^r)(3^{2r}+1)$. This shows that $\displaystyle{\frac{2^{2r}+1}{2^{2r}+2^r}<\frac{3^{2r}+1}{3^{2r}+3^r}}$, which completes the proof.  
\end{proof} 

The following theorem replaces the question of whether or not $\overline{\sigma_{-r}^*(\mathbb N)}$ is connected with a question concerning infinitely many inequalities. The advantage in doing this is that we will further reduce this problem to the consideration of a finite list of inequalities in Theorem~\ref{Thm2.2}. Recall from the introduction  that $\overline{\sigma_{-r}^*(\mathbb N)}$ is connected if and only if it is equal to the interval $[1,\zeta(r)/\zeta(2r)]$. 

\begin{theorem} \label{Thm2.1}
If $r>1$, then $\overline{\sigma_{-r}^*(\mathbb N)}=\displaystyle{\left[1,\zeta(r)/\zeta(2r)\right)}$ if and only if \[\frac{p_m^{2r}+p_m^r}{p_m^{2r}+1}\leq\prod_{i=m+1}^{\infty}\left(1+\frac{1}{p_i^r}\right)\] for all positive integers $m$. 
\end{theorem}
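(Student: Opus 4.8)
The plan is to pass to a multiplicative ``building block'' description of the range and to recognize the stated inequalities as exactly the conditions preventing gaps. For each prime $p_i$, the factor $\sigma_{-r}^*(p_i^{\beta})=1+p_i^{-\beta r}$ that $p_i$ contributes to a value lies in $C_i:=\{1\}\cup\{1+p_i^{-\beta r}:\beta\ge 1\}$ (the value $1$ meaning $p_i\nmid n$), and every value of $\sigma_{-r}^*$ is a product $\prod_i c_i$ with $c_i\in C_i$ and $c_i=1$ for all but finitely many $i$. Write $T_m:=\prod_{i=m+1}^{\infty}(1+p_i^{-r})$, so $T_0=\zeta(r)/\zeta(2r)$ and $T_m\downarrow 1$. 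Throughout I use that the inequality at $m$ is equivalent to $\frac{1+p_m^{-r}}{1+p_m^{-2r}}\le T_m$, whose left-hand side is the ratio of the two largest elements of $C_m$. Let $S_m$ be the set of products $\prod_{i>m}c_i$ over primes past $p_m$, so $\overline{S_0}=\overline{\sigma_{-r}^*(\mathbb N)}$, $S_m\subseteq[1,T_m)$ and $\sup S_m=T_m$.

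For sufficiency, assume every inequality holds and fix $y\in[1,T_0]$. I produce elements of $\sigma_{-r}^*(\mathbb N)$ converging to $y$ by a greedy interval-chaining argument. The point is that for each $m$ the intervals $[c,\,c\,T_{m+1}]$ with $c\in C_{m+1}$ cover $[1,T_m]$: the topmost one ($c=1+p_{m+1}^{-r}$) has right endpoint $T_m$; the multiplicative gaps of $C_{m+1}$ are largest between its two top elements and decrease thereafter (a short calculation gives $(1+x)/(1+x^2)>(1+x^2)/(1+x^3)>\cdots$ for $0<x<1$); and the single worst gap is bridged exactly by the inequality at $m+1$, i.e.\ $(1+p_{m+1}^{-2r})T_{m+1}\ge 1+p_{m+1}^{-r}$. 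Hence I may pick $c_1\in C_1$ with $y/c_1\in[1,T_1]$, then $c_2\in C_2$ with $y/(c_1c_2)\in[1,T_2]$, and so on; the partial products $P_k=c_1\cdots c_k$ lie in $\sigma_{-r}^*(\mathbb N)$ and satisfy $y/P_k\in[1,T_k]$. Since $T_k\to 1$, we get $P_k\to y$, so $y\in\overline{\sigma_{-r}^*(\mathbb N)}$; with $\sigma_{-r}^*(\mathbb N)\subseteq[1,T_0)$ this yields $\overline{\sigma_{-r}^*(\mathbb N)}=[1,T_0]$.

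For necessity I argue by contrapositive: if the inequality fails at some $m$, so that $G_m:=\log\frac{1+p_m^{-r}}{1+p_m^{-2r}}>\log T_m=:R_m$, then $\overline{\sigma_{-r}^*(\mathbb N)}$ has a gap. Taking logarithms turns the range into the achievable sums $\sum_i\log c_i$, which I split as $\mathcal A+\mathcal B$, where $\mathcal A$ collects the contributions of $p_1,\dots,p_m$ and $\mathcal B$ those of later primes; here $\overline{\mathcal A}$ is compact, $\overline{\mathcal B}\subseteq[0,R_m]$, and $\max\mathcal A=P:=\sum_{j\le m}\log(1+p_j^{-r})$. The crux is that $\overline{\mathcal A}$ has no element in $(P-G_m,P)$. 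Writing each contribution as $\log(1+p_j^{-r})$ minus a deficit $d_j\ge 0$, the smallest positive deficit available from $p_j$ is exactly $G_j=\log\frac{1+p_j^{-r}}{1+p_j^{-2r}}$ (the top gap of $C_j$); so an element of $\mathcal A$ exceeding $P-G_m$ has total deficit $<G_m$, and if any single $p_j$ ($j\le m$) contributed a positive deficit it would contribute at least $G_j\ge G_m$, the bound $G_j\ge G_m$ for $j\le m$ being precisely Lemma~\ref{Lem2.1}. Thus every element of $\overline{\mathcal A}$ in $(P-G_m,P]$ equals $P$. Since $\overline{\mathcal B}\subseteq[0,R_m]$ with $G_m>R_m$, the gap survives the sum: $\overline{\sigma_{-r}^*(\mathbb N)}$ (in logarithmic coordinates) misses the nonempty interval $(P-G_m+R_m,\,P)$, which lies strictly below $\log T_0=P+R_m$, so the closure is not the full interval.

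The routine parts are the two one-variable facts invoked above (the decreasing multiplicative gaps of $C_i$, and that each positive deficit from $p_j$ is at least $G_j$). The main obstacle is the necessity direction: one must see why the values omitted at prime $p_m$ cannot be reinstated. The larger primes cannot fill the gap because their total logarithmic contribution $R_m$ is smaller than $G_m$, and the smaller primes cannot fill it because, by Lemma~\ref{Lem2.1}, each of them has an \emph{even larger} top gap and so cannot land strictly inside $(P-G_m,P)$. Locating this gap just below the supremum $P$ of the finite-prime sums, rather than somewhere in the interior where small-prime contributions could plug it, is the key idea.
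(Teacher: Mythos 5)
Your proof is correct and follows essentially the same route as the paper: sufficiency via a prime-by-prime greedy construction whose success hinges on the largest multiplicative gap of each $C_m$ being the one between its top two elements and being bridged by the hypothesis, and necessity by exhibiting exactly the same gap $\left(\frac{\zeta(r)}{\zeta(2r)}\cdot\frac{p_m^{2r}+1}{p_m^{2r}+p_m^r},\ \prod_{i=1}^m\left(1+p_i^{-r}\right)\right)$ with Lemma~\ref{Lem2.1} playing the identical role. The differences are only presentational: you maintain the invariant $y/P_k\in[1,T_k]$ through an explicit covering lemma where the paper derives a contradiction from the accumulated deficits $E_n$, and you phrase the necessity additively in logarithmic coordinates rather than bounding $\sigma_{-r}^*(N)$ directly.
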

\begin{proof} 
First, suppose that $\displaystyle{\frac{p_m^{2r}+p_m^r}{p_m^{2r}+1}\leq\prod_{i=m+1}^{\infty}\left(1+\frac{1}{p_i^r}\right)}$ for all positive integers $m$. We will show that the range of $\log\sigma_{-r}^*$ is dense in $\displaystyle{\left[0,\log\left(\zeta(r)/\zeta(2r)\right)\right)}$, which will then imply that the range of $\sigma_{-r}^*$ is dense in $\displaystyle{\left[1,\zeta(r)/\zeta(2r)\right)}$. Fix some $\displaystyle{x\in\left(0,\log\left(\zeta(r)/\zeta(2r)\right)\right)}$. We will construct a sequence $(C_i)_{i=1}^{\infty}$ of elements of the range of $\log\sigma_{-r}^*$ that converges to $x$. First, let $C_0=0$. For each positive integer $n$, if $C_{n-1}<x$, let
$\displaystyle{C_n=C_{n-1}+\log\left(1+p_n^{-\alpha_n r}\right)}$, where $\alpha_n$ is the smallest positive integer that satisfies $\displaystyle{C_{n-1}+\log\left(1+p_n^{-\alpha_n r}\right)\leq x}$. If $C_{n-1}=x$, simply set $C_n=C_{n-1}=x$. For each $n\in\mathbb{N}$, $C_n\in\log\sigma_{-r}^*(\mathbb N)$. Indeed, if $C_n\neq C_{n-1}$, then  
\[C_n=\sum_{i=1}^n \log\left(1+p_i^{-\alpha_i r}\right)=\log\left(\prod_{i=1}^n\left(1+p_i^{-\alpha_i r}\right)\right)=\log\sigma_{-r}^*\left(\prod_{i=1}^np_i^{\alpha_i}\right).\] If, however, $C_n=C_{n-1}=x$, then we may let $l$ be the smallest positive integer such that $C_l=x$ and show, in the same manner as above, that $\displaystyle{C_n=C_l=\log\sigma_{-r}^*\left(\prod_{i=1}^lp_i^{\alpha_i}\right)}$. Let us write $\displaystyle{\gamma=\lim_{n\rightarrow\infty}C_n}$. Note that $\gamma$ exists and that $\gamma\leq x$ because the sequence $(C_i)_{i=1}^\infty$ is nondecreasing and bounded above by $x$. If we can show that $\gamma=x$, then we will be done. Therefore, let us assume instead that $\gamma<x$. 

We have $C_n=C_{n-1}+\log(1+p_n^{-\alpha_n r})$ for all positive integers $n$. Write
$D_n=\log(1+p_n^{-r})-\log(1+p_n^{-\alpha_n r})$ and $\displaystyle{E_n=\sum_{i=1}^n D_i}$. As 
\[x+\lim_{n\to\infty}E_n>\gamma+\lim_{n\to\infty}E_n=\lim_{n\rightarrow\infty}(C_n+E_n)=\lim_{n\rightarrow\infty}\left(\sum_{i=1}^n\log\left(1+p_i^{-\alpha_i r}\right)+\sum_{i=1}^n D_i\right)\] \[=\lim_{n\rightarrow\infty}\sum_{i=1}^n\log\left(1+p_i^{-r}\right)=\log\left(\zeta(r)/\zeta(2r)\right),\] we have $\displaystyle{\lim_{n\rightarrow\infty}E_n>\log\left(\zeta(r)/\zeta(2r)\right)-x}$. Therefore, we may let $m$ be the smallest positive integer such that $\displaystyle{E_m>\log\left(\zeta(r)/\zeta(2r)\right)-x}$. If $\alpha_m=1$ and $m>1$, then $D_m=0$. This forces $\displaystyle{E_{m-1}=E_m>\log\left(\zeta(r)/\zeta(2r)\right)-x}$, contradicting the minimality of $m$. If $\alpha_m=1$ and $m=1$, then $\displaystyle{0=E_m>\log\left(\zeta(r)/\zeta(2r)\right)-x}$, which is also a contradiction since we originally chose $x<\log(\zeta(r)/\zeta(2r))$. Therefore, $\alpha_m>1$. Due to the way we defined $C_m$ and $\alpha_m$, we have
$\displaystyle{C_{m-1}+\log\left(1+p_n^{-(\alpha_{m}-1)r}\right)>x}$. Hence,
\[\log\left(1+p_n^{-(\alpha_{m}-1)r}\right)-\log\left(1+p_n^{-\alpha_m r}\right)>x-C_m.\] Using our original assumption that 
$\displaystyle{\frac{p_m^{2r}+p_m^r}{p_m^{2r}+1}\leq\prod_{i=m+1}^{\infty}\left(1+\frac{1}{p_i^r}\right)}$, we have 
\[\log\left(\frac{p_m^{2r}+p_m^r}{p_m^{2r}+1}\right)\leq\sum_{i=m+1}^{\infty}\log\left(1+\frac{1}{p_i^r}\right)=\log\left(\frac{\zeta(r)}{\zeta(2r)}\right)-E_m-C_m\]
\[<x-C_m<\log\left(1+p_n^{-(\alpha_{m}-1)r}\right)-\log\left(1+p_n^{-\alpha_m r}\right)=\log\left(\frac{p_m^{\alpha_m r}+p_m^r}{p_m^{\alpha_m r}+1}\right).\]
Thus, 
\[\frac{p_m^{2r}+p_m^r}{p_m^{2r}+1}<\frac{p_m^{\alpha_m r}+p_m^r}{p_m^{\alpha_m r}+1}.\] 
Rewriting this inequality, we get $\displaystyle{p_m^{2r}+p_m^{(\alpha_m+1)r}<p_m^{3r}+p_m^{\alpha_m r}}$. 
Now, dividing through by $p_m^{\alpha_mr}$ yields $\displaystyle{p_m^{(2-\alpha_m)r}+p_m^r<1+p_m^{(3-\alpha_m)r}}$, which is impossible since 
$\alpha_m\geq 2$. This contradiction proves that $\gamma=x$, so $\overline{\sigma_{-r}^*(\mathbb N)}=\left[1,\zeta(r)/\zeta(2r)\right]$. 

To prove the converse, suppose there exists some positive integer $m$ such that \[\frac{p_m^{2r}+p_m^r}{p_m^{2r}+1}>\prod_{i=m+1}^{\infty}\left(1+\frac{1}{p_i^r}\right).\] We may write this inequality as 
\begin{equation} \label{EqEdit1}
\frac{p_m^{2r}+1}{p_m^{2r}+p_m^r}<\prod_{i=m+1}^{\infty}\left(1+\frac{1}{p_i^r}\right)^{-1}. 
\end{equation} 

Fix a positive integer $N$. If 
$\nu_{p_s}(N)=1$ for all $s\in\{1,2,\ldots,m\}$, then \[\sigma_{-r}^*(N)\geq\prod_{s=1}^m\left(1+\frac{1}{p_s^r}\right)=\frac{\zeta(r)}{\zeta(2r)}\prod_{i=m+1}^{\infty}\left(1+\frac{1}{p_i^r}\right)^{-1}.\] 
On the other hand, if $\nu_{p_s}(N)\neq 1$ for some $s\in\{1,2,\ldots,m\}$, then 
$\displaystyle{\sigma_{-r}^*\left(p_s^{\nu_{p_s}(N)}\right)\leq}$ $\displaystyle{1+\frac{1}{p_s^{2r}}}$. This implies that  
\[\sigma_{-r}^*(N)\leq\left(1+\frac{1}{p_s^{2r}}\right)\prod_{\substack{i=1 \\ i\neq s}}^{\infty}\left(1+\frac{1}{p_i^r}\right)=\frac{\zeta(r)}{\zeta(2r)}\frac{1+p_s^{-2r}}{1+p_s^{-r}}=\frac{\zeta(r)}{\zeta(2r)}\frac{p_s^{2r}+1}{p_s^{2r}+p_s^r}\] in this case.  
Using Lemma~\ref{Lem2.1}, we have 
\[\sigma_{-r}^*(N)\leq\frac{\zeta(r)}{\zeta(2r)}\frac{p_m^{2r}+1}{p_m^{2r}+p_m^r}.\] 
As $N$ was arbitrary, we have shown that there is no element of the range of $\sigma_{-r}^*$ in the interval \[\left(\frac{\zeta(r)}{\zeta(2r)}\frac{p_m^{2r}+1}{p_m^{2r}+p_m^r},\frac{\zeta(r)}{\zeta(2r)}\prod_{i=m+1}^{\infty}\left(1+\frac{1}{p_i^r}\right)^{-1}\right).\] This interval is a gap in the range of $\sigma_{-r}^*$ because of the inequality \eqref{EqEdit1}. 
\end{proof} 

As mentioned above, we wish to reduce the task of checking the infinite collection of inequalities given in Theorem~\ref{Thm2.1} to that of checking finitely many inequalities. We do so in Theorem~\ref{Thm2.2}, the proof of which requires the following lemma. 

\begin{lemma} \label{Lem2.2}
If $j\in\mathbb{N}\setminus\{1,2,3,4,6,9\}$, then $\displaystyle{\frac{p_{j+1}}{p_j}<\sqrt[3]{2}}$. 
\end{lemma}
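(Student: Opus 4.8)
The plan is to split the range of $j$ into a ``large'' regime, where an effective version of Bertrand's postulate forces the prime ratio below $\sqrt[3]{2}$, and a ``small'' regime, where both the inequality and the six listed exceptions can be settled by direct computation. The key quantitative input I would invoke is Nagura's theorem, which asserts that for every real $x\geq 25$ the interval $\left(x,\tfrac{6}{5}x\right)$ contains a prime.

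For the large regime, the point is that $\sqrt[3]{2}\approx 1.2599>\tfrac{6}{5}$, so the bare Bertrand bound $p_{j+1}<2p_j$ is far too weak, whereas Nagura's sharper gap is exactly strong enough. Since $p_{10}=29\geq 25$, every $j\geq 10$ satisfies $p_j\geq 25$, and Nagura's theorem then produces a prime strictly between $p_j$ and $\tfrac{6}{5}p_j$. As the least prime exceeding $p_j$ is $p_{j+1}$, this gives $p_{j+1}<\tfrac{6}{5}p_j$, whence $\frac{p_{j+1}}{p_j}<\frac{6}{5}<\sqrt[3]{2}$, which is the desired conclusion for all $j\geq 10$.

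It remains to treat $1\leq j\leq 9$, a finite check. To keep this rigorous and free of comparisons with the irrational number $\sqrt[3]{2}$, I would instead verify the equivalent integer inequality $p_{j+1}^3<2p_j^3$ for each such $j$. One finds that $p_{j+1}^3<2p_j^3$ holds precisely for $j\in\{5,7,8\}$ and fails for $j\in\{1,2,3,4,6,9\}$; the tightest case is $j=9$, where $p_{10}^3=24389$ while $2p_9^3=24334$. This isolates the exceptional set exactly as claimed. The main obstacle here is not conceptual but rather securing a prime-gap estimate sharp enough to beat $\sqrt[3]{2}$ uniformly: once Nagura's theorem is in hand, the only remaining work is the bounded computation for $j\leq 9$.
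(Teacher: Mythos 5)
Your proof is correct, but it takes a genuinely different route from the paper's. The paper bounds the ratio via the Rosser--Schoenfeld estimates $p_n>n\log n$ and $p_n<n(\log n+\log\log n)$, observes that the resulting upper bound for $p_{j+1}/p_j$ falls below $\sqrt[3]{2}$ only once $j\geq 3100$, and then disposes of all $j<3100$ by a direct search through prime ratios. Your appeal to Nagura's theorem (a prime in $\left(x,\tfrac{6}{5}x\right)$ for every $x\geq 25$) yields $p_{j+1}/p_j<\tfrac{6}{5}<\sqrt[3]{2}$ at once for all $j\geq 10$, since $p_{10}=29>25$, and shrinks the finite verification to $j\leq 9$, which you carry out correctly via the equivalent integer inequality $p_{j+1}^3<2p_j^3$ (your arithmetic checks out, including the near-miss $29^3=24389$ versus $2\cdot 23^3=24334$ at $j=9$). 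The trade-off is purely in the choice of external input: both arguments rest on an effective Chebyshev-type prime-gap result, but Nagura's constant $6/5$ already beats $\sqrt[3]{2}$, so it collapses the computation to a handful of hand checks, whereas the Rosser--Schoenfeld route cites a more standard reference at the cost of a larger (though still routine) machine verification. One small remark: your check that the inequality \emph{fails} for $j\in\{1,2,3,4,6,9\}$ is not required by the lemma as stated, which only asserts the inequality off the exceptional set, but it does confirm that the exceptional set is exactly right.
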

\begin{proof}
In \cite{Nagura}, it is shown that $\dfrac{p_{j+1}}{p_j}\leq \dfrac{6}{5}<\sqrt[3]{2}$ for all $j\geq 10$. We easily verify the cases $j=5,7,8$ by hand.    
\end{proof} 

\begin{theorem} \label{Thm2.2} 
If $r\in(1,3]$, then $\overline{\sigma_{-r}^*(\mathbb N)}=\left[1,\zeta(r)/\zeta(2r)\right]$ if and only if \[\frac{p_m^{2r}+p_m^r}{p_m^{2r}+1}\leq\prod_{i=m+1}^{\infty}\left(1+\frac{1}{p_i^r}\right)\] for all $m\in\{1,2,3,4,6,9\}$. 
\end{theorem}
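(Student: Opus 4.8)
The plan is to treat the two directions separately, with essentially all of the work going into the ``if'' direction. The ``only if'' direction is immediate: if $\overline{\sigma_{-r}^*(\mathbb N)}=[1,\zeta(r)/\zeta(2r)]$, then Theorem \ref{Thm2.1} guarantees that the displayed inequality holds for \emph{all} positive integers $m$, so in particular it holds for each $m\in\{1,2,3,4,6,9\}$.

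For the ``if'' direction I would assume the inequality holds for every $m\in\{1,2,3,4,6,9\}$ and appeal to Theorem \ref{Thm2.1}: it then suffices to prove that
\[\frac{p_m^{2r}+p_m^r}{p_m^{2r}+1}\le\prod_{i=m+1}^{\infty}\left(1+\frac{1}{p_i^r}\right)\]
holds \emph{automatically} for every $m\notin\{1,2,3,4,6,9\}$ whenever $r\in(1,3]$. To this end I would first record two elementary estimates: the crude upper bound $\frac{p_m^{2r}+p_m^r}{p_m^{2r}+1}-1=\frac{p_m^r-1}{p_m^{2r}+1}<p_m^{-r}$ on the left-hand side, and the lower bound $\prod_{i>m}\left(1+p_i^{-r}\right)\ge 1+\sum_{i>m}p_i^{-r}$ on the right-hand side. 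Combining these, the desired inequality follows as soon as one establishes the single cleaner claim $\sum_{i>m}p_i^{-r}\ge p_m^{-r}$.

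The heart of the matter is the estimate $p_{m+k}<2^{k/3}p_m$ for all $k\ge 1$, which I would deduce from Lemma \ref{Lem2.2}. For $m\ge 10$ every index $m,m+1,\ldots$ lies outside $\{1,2,3,4,6,9\}$, so the bounds $p_{j+1}/p_j<\sqrt[3]{2}$ telescope to give $p_{m+k}/p_m<2^{k/3}$ immediately. Granting this estimate and using $r\le 3$, I would bound
\[\sum_{i>m}p_i^{-r}=\sum_{k=1}^{\infty}p_{m+k}^{-r}>p_m^{-r}\sum_{k=1}^{\infty}2^{-kr/3}=\frac{p_m^{-r}}{2^{r/3}-1}\ge p_m^{-r},\]
where the last inequality holds precisely because $2^{r/3}\le 2$, that is, $r\le 3$. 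This is exactly where the hypothesis $r\in(1,3]$ enters, and it dovetails with the $\sqrt[3]{2}$ in Lemma \ref{Lem2.2}. Chaining the estimates then yields $\frac{p_m^{2r}+p_m^r}{p_m^{2r}+1}<1+\sum_{i>m}p_i^{-r}\le\prod_{i>m}\left(1+p_i^{-r}\right)$, which is the required inequality.

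The main obstacle is that the clean telescoping above is only available for $m\ge 10$; the remaining non-exceptional indices $m\in\{5,7,8\}$ lie \emph{below} the exceptional indices $6$ and $9$, so the product defining $p_{m+k}/p_m$ picks up a bad ratio $p_{j+1}/p_j\ge\sqrt[3]{2}$. I would dispatch these finitely many cases by direct computation: check $p_{m+k}<2^{k/3}p_m$ by hand for the finitely many $k$ with $m+k\le 10$ (the bad ratios at $j=6$ and $j=9$ turn out to be absorbed by the small neighbouring ratios, so the cumulative bound survives), and for $m+k>10$ factor $p_{m+k}/p_m=(p_{m+k}/p_{10})(p_{10}/p_m)$, bounding the first factor by $2^{(m+k-10)/3}$ via Lemma \ref{Lem2.2} and the second by $2^{(10-m)/3}$ from the hand computation. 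This reestablishes $p_{m+k}<2^{k/3}p_m$ for every $k$ when $m\in\{5,7,8\}$, after which the argument of the previous paragraph applies verbatim and completes the proof.
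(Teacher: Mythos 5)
Your proof is correct, but it takes a genuinely different route from the paper's. The paper also reduces to showing that the inequality for the six listed indices forces it for all $m$, but it does so by proving that the quantity $F(m,r)=\frac{p_m^{2r}+p_m^r}{p_m^{2r}+1}\prod_{i=1}^m\left(1+p_i^{-r}\right)$ is \emph{increasing} in $m$ for $m\notin\{1,2,3,4,6,9\}$ (the step $F(m+1,r)>F(m,r)$ is an algebraic consequence of $p_{m+1}^r<2p_m^r$, i.e.\ of Lemma \ref{Lem2.2} together with $r\le 3$); since $F(m,r)\to\zeta(r)/\zeta(2r)$, monotonicity gives $F(m,r)<\zeta(r)/\zeta(2r)$ for $m\ge 10$, and the stray indices $5,7,8$ are handled by chaining upward to the hypothesized bounds at $m=6$ and $m=9$ via $F(5,r)<F(6,r)$ and $F(7,r)<F(8,r)<F(9,r)$. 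You instead verify the target inequality directly for every non-exceptional $m$ by the comparison $\frac{p_m^{2r}+p_m^r}{p_m^{2r}+1}<1+p_m^{-r}\le 1+\sum_{i>m}p_i^{-r}\le\prod_{i>m}\left(1+p_i^{-r}\right)$, reducing everything to $\sum_{i>m}p_i^{-r}\ge p_m^{-r}$, which you get from the telescoped bound $p_{m+k}<2^{k/3}p_m$ and the geometric series $\sum_k 2^{-kr/3}=1/(2^{r/3}-1)\ge 1$ for $r\le 3$. Both arguments use Lemma \ref{Lem2.2} and use $r\le 3$ in exactly the same place. What your version buys: the cases $m=5,7,8$ are disposed of \emph{unconditionally} (your hand-check of the ratios, e.g.\ $p_9/p_8=23/19<2^{1/3}\cdot$, does go through, the bad ratios at $j=6,9$ being absorbed by the adjacent small ones), so you never need the hypothesis at $m=6,9$ to control $m=5,7,8$; the trade-off is that you need the finite numerical verification of $p_{m+k}<2^{k/3}p_m$ for small indices, whereas the paper's monotonicity argument is a single uniform algebraic computation plus the observation about the limit. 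Both are complete proofs.
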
 
\begin{proof} 
Let \[F(m,r)=\frac{p_m^{2r}+p_m^r}{p_m^{2r}+1}\prod_{i=1}^m\left(1+\frac{1}{p_i^r}\right)\] so that the inequality $\displaystyle{\frac{p_m^{2r}+p_m^r}{p_m^{2r}+1}\leq\prod_{i=m+1}^{\infty}\left(1+\frac{1}{p_i^r}\right)}$ is equivalent to $\displaystyle{F(m,r)\leq\frac{\zeta(r)}{\zeta(2r)}}$. Let $r\in(1,3]$. By Theorem~\ref{Thm2.1}, it suffices to show that if $\displaystyle{F(m,r)\leq\frac{\zeta(r)}{\zeta(2r)}}$ for all $m\in\{1,2,3,4,6,9\}$, then $\displaystyle{F(m,r)\leq\frac{\zeta(r)}{\zeta(2r)}}$ for all $m\in\mathbb{N}$. Therefore, assume that $r$ is such that $F(m,r)\leq\dfrac{\zeta(r)}{\zeta(2r)}$ for all $m\in\{1,2,3,4,6,9\}$. 

We will show that $F(m+1,r)>F(m,r)$ for all $m\in\mathbb N\setminus\{1,2,3,4,6,9\}$. This will show that $(F(m,r))_{m=10}^{\infty}$ is an increasing sequence. As 
$\displaystyle{\lim_{m\rightarrow\infty}F(m,r)=}$ $\displaystyle{\frac{\zeta(r)}{\zeta(2r)}}$, it will then follow that $\displaystyle{F(m,r)<\frac{\zeta(r)}{\zeta(2r)}}$ for all integers $m\geq 10$. Furthermore, we will see that
$F(5,r)<F(6,r)\leq\dfrac{\zeta(r)}{\zeta(2r)}$ and $F(7,r)<F(8,r)<\displaystyle{F(9,r)\leq\frac{\zeta(r)}{\zeta(2r)}}$, which will complete the proof. 

Let $m\in\mathbb{N}\backslash\{1,2,3,4,6,9\}$. By Lemma~\ref{Lem2.2},
$\dfrac{p_{m+1}}{p_m}<\sqrt[3]{2}\leq \sqrt[r]{2}$. This shows that $p_{m+1}^r<2p_m^r$, implying that $2p_m^{2r}>p_m^r p_{m+1}^r$. Therefore, 
\[2p_m^{2r}+2>p_m^r p_{m+1}^r+\frac{p_m^r}{p_{m+1}^r}-p_{m+1}^r-\frac{1}{p_{m+1}^r}=\frac{(p_m^r-1)(p_{m+1}^{2r}+1)}{p_{m+1}^r}.\] 
Multiplying each side of this inequality by $\displaystyle{\frac{p_{m+1}^r}{(p_{m+1}^{2r}+1)(p_m^{2r}+1)}}$ and adding $1$ to each side, we get 
\[1+\frac{2p_{m+1}^r}{p_{m+1}^{2r}+1}>1+\frac{p_m^r-1}{p_m^{2r}+1},\]
which we may write as 
\[\frac{(p_{m+1}^r+1)^2}{p_{m+1}^{2r}+1}>\frac{p_m^{2r}+p_m^r}{p_m^{2r}+1}.\] Finally, we get 
\[F(m+1,r)=\frac{p_{m+1}^{2r}+p_{m+1}^r}{p_{m+1}^{2r}+1}\prod_{i=1}^{m+1}\left(1+\frac{1}{p_i^r}\right)=\frac{(p_{m+1}^r+1)^2}{p_{m+1}^{2r}+1}\prod_{i=1}^m\left(1+\frac{1}{p_i^r}\right)\] 
\[>\frac{p_m^{2r}+p_m^r}{p_m^{2r}+1}\prod_{i=1}^m\left(1+\frac{1}{p_i^r}\right)=F(m,r).\qedhere\] 
\end{proof} 
Now, let \[V_m(r)=\log\left(\frac{p_m^{2r}+p_m^r}{p_m^{2r}+1}\right)-\sum_{i=m+1}^\infty\log\left(1+\frac{1}{p_i^r}\right).\] Equivalently, $\displaystyle{V_m(r)=\log(F(m,r))-\log\left(\frac{\zeta(r)}{\zeta(2r)}\right)}$, where $F$ is the function defined in the proof of Theorem~\ref{Thm2.2}. Observe that \[\frac{p_m^{2r}+p_m^r}{p_m^{2r}+1}\leq\prod_{i=m+1}^{\infty}\left(1+\frac{1}{p_i^r}\right)\] if and only if $V_m(r)\leq 0$. If we let $J_m(r)=\displaystyle{\sum_{i=m+1}^{m+6}\frac{1}{p_i^r+1}-\frac{p_m^{2r}-2p_m^r-1}{(p_m^r+1)(p_m^{2r}+1)}}$, then we have \[\frac{\partial}{\partial r}J_m(r)=\frac{p_m^r((p_m^r-1)^4-12p_m^{2r})\log p_m}{(p_m^{r}+1)^2(p_m^{2r}+1)^2}-\sum_{i=m+1}^{m+6}\frac{p_i^r\log p_i}{(p_i^r+1)^2}.\] It is not difficult to verify that $\displaystyle{\frac{p_m^r((p_m^r-1)^4-12p_m^{2r})\log p_m}{(p_m^{r}+1)^2(p_m^{2r}+1)^2}}\geq -1$ for all $r\in[1,2]$ and $m\in\{1,2,3,4,6,9\}$. Therefore, when $r\in[1,2]$ and $m\in\{1,2,3,4,6,9\}$, we have \[\frac{\partial}{\partial r}J_m(r)\geq -1-\sum_{i=m+1}^{m+6}\frac{p_i^r\log p_i}{(p_i^r+1)^2}\geq-1-\sum_{i=m+1}^{m+6}\frac{\log p_i}{p_i^r}> -7.\] Numerical calculations show that $\displaystyle{J_m(r)>\frac{1}{400}}$ for all $m\in\{1,2,3,4,6,9\}$ and \[r\in\left\{1+\frac{n}{2800}\colon n\in\{0,1,2,\ldots,2800\}\right\}.\] Because each function $J_m$ is continuous in $r$ for $r\in[1,2]$, we see that \[J_m(r)>\frac{1}{400}-7\left(\frac{1}{2800}\right)=0\] for all $r\in[1,2]$ and $m\in\{1,2,3,4,6,9\}$.   

We introduced the functions $J_m$ so that we could write  
\[\frac{\partial}{\partial r}V_m(r)=\sum_{i=m+1}^{\infty}\frac{\log p_i}{p_i^r+1}-\frac{(p_m^{2r}-2p_m^r-1)\log p_m}{(p_m^r+1)(p_m^{2r}+1)}>(\log p_m)J_m(r)>0\] for all $m\in\{1,2,3,4,6,9\}$ and $r\in[1,2]$. 
A quick numerical calculation shows that $V_2(1.5)<0<V_2(2)$, so the function $V_2$ has exactly one root, which we will call $\eta^*$, in the interval $(1,2]$. Further calculations show that $V_m(2)<0$ for all $m\in\{1,3,4,6,9\}$. Hence, $V_m(r)\leq 0$ for all $m\in\{1,2,3,4,6,9\}$ and $r\in(1,\eta^*]$. By Theorem~\ref{Thm2.2}, this means that if $r\in(1,2]$, then $\overline{\sigma_{-r}^*(\mathbb N)}\left[1,\zeta(r)/\zeta(2r)\right]$ if and only if $r\leq\eta^*$. 

Next, note that \[\frac{\partial}{\partial r}V_2(r)=\sum_{i=3}^{\infty}\frac{\log p_i}{p_i^r+1}-\frac{(3^{2r}-2\cdot 3^r-1)\log 3}{(3^{2r}+1)(3^r+1)}>-\frac{(3^{2r}-2\cdot 3^r-1)\log 3}{(3^{2r}+1)(3^r+1)}\] 
\[>-\frac{(3^{2r}+1)\log 3}{(3^{2r}+1)(3^r+1)}\geq -\frac{\log 3}{3^2+1}>-1.1\]
for all $r\in[2,3]$. Let $\displaystyle{A=\left\{2+\frac{n}{400}\colon n\in\{0,1,2,\ldots,400\}\right\}}$. With a computer program, one may verify that $V_2(r)>0.003$ for all $r\in A$. Because $V_2$ is continuous, this shows that $V_2(r)>0.003-1.1\displaystyle{\left(\frac{1}{400}\right)}>0$ for all $r\in [2,3]$. Consequently, $\overline{\sigma_{-r}^*(\mathbb N)}\neq\displaystyle{\left[1,\zeta(r)/\zeta(2r)\right)}$ if $r\in[2,3]$. 

We are now in a position to prove Theorem~\ref{Thm2.3}. Note that the equation defining $\eta^*$ in the statement of this theorem is simply a rearrangement of the equation $V_2(\eta^*)=0$. Therefore, we have shown that the theorem is true for $r\in(1,3]$. In order to prove the theorem for $r>3$, it suffices (by Theorem~\ref{Thm2.2}) to show that $\displaystyle{F(1,r)>\frac{\zeta(r)}{\zeta(2r)}}$ for all $r>3$. If $r>3$, then  
\[F(1,r)=\frac{(2^r+1)^2}{2^{2r}+1}=\frac{2^{2r}+2^{r+1}+1}{2^{2r}+1}>\frac{2^{2r}+2^r+\frac{2^{r+1}}{r-1}}{2^{2r}+1}=\frac{1+\frac{1}{2^r}+\frac{1}{(r-1)2^{r-1}}}{1+\frac{1}{2^{2r}}}\] 
\[>\frac{1+\frac{1}{2^r}+{\frac{1}{(r-1)2^{r-1}}}}{\zeta(2r)}=\frac{1+\frac{1}{2^r}+\int_2^{\infty}x^{-r}dx}{\zeta(2r)}>\frac{\zeta(r)}{\zeta(2r)}.\]   
 
\section{Future Directions} 
Let $\mathcal N^*(t)$ denote the number of connected components of $\overline{\sigma_t^*(\mathbb N)}$. It would be interesting to obtain analogues of Zubrilina's results \cite{Nina} by finding asymptotic estimates for $\mathcal N^*(-r)$ as $r\to\infty$. Let \[E_m^*=\{t\in\mathbb R\colon\mathcal N^*(t)=m\}.\] Theorem~\ref{Thm2.3} tells us that $E_1^*=[-\eta^*,0)$. The sets $E_m^*$ are the natural unitary analogues of the sets $E_m$ defined in \cite[Section 4]{Defant5}. Continuing the analogy, we say a positive integer $m$ is a \emph{unitary Zubrilina number} if $E_m^*=\emptyset$ (the name comes from Zubrilina's result that $E_4=\emptyset$). We do not have any specific examples of unitary Zubrilina numbers, but we still make the following conjectures. 

\begin{conjecture}\label{Conj1}
There are infinitely many unitary Zubrilina numbers.
\end{conjecture}
  
\begin{conjecture}\label{Conj2}
For $r>1$, $\mathcal N^*(-r)$ is monotonically increasing as a function of $r$. 
\end{conjecture}

Note that Conjecture \ref{Conj2} implies that the sets $E_m^*$ are intervals. 

\section{Acknowledgements} 
The author thanks the referee for carefully reading the manuscript and providing very helpful suggestions.

\end{document}